\newtheorem{theorem}{Theorem} 
\newtheorem{proposition}[theorem]{Proposition}
\newcommand{\ba}{\begin{align}}
\newcommand{\ea}{\end{align}}  
\newcommand{\be}{\begin{equation}}
\newcommand{\ee}{\end{equation}}
\newcommand{\bea}{\begin{eqnarray}}
\newcommand{\eea}{\end{eqnarray}}
\newcommand{\barr}{\begin{array}}
\newcommand{\earr}{\end{array}}
\newcommand{\bn}{\begin{enumerate}}
\newcommand{\en}{\end{enumerate}}
\newcommand{\bi}{\begin{itemize}}
\newcommand{\ei}{\end{itemize}}
\newcommand{\bbbm}{\begin{pmatrix}}
\newcommand{\eeem}{\end{pmatrix}}
\newcommand{\cN}{{\cal N}}
\newcommand{\cP}{{\cal P}}
\newcommand{\cX}{{\cal X}}
\newcommand{\cM}{{\cal M}}
\newcommand{\R}{{\mathbb R}}
\newcommand{\N}{{\mathbb N}}
\newcommand{\E}{\mathbb{E}}
\newcommand{\ga}{\gamma}
\newcommand{\la}{\lambda}
\newcommand{\tta}{\theta}
\newcommand{\ignore}[1]{}{}
\newcommand{\nn}{\nonumber}
\newcommand{\q}{\quad}
\newcommand{{\QED}}{{\hfill QED} \smallskip}
\newcommand{\spt}{\mathop{\rm spt}}
\renewcommand{\subset}{\subseteq}
\renewcommand{\phi}{\varphi}
\newcommand{\cal}{\mathcal}
 \definecolor{darkspringgreen}{rgb}{0.09, 0.45, 0.27} 
 \definecolor{darkgray}{rgb}{0.66, 0.66, 0.66}
\numberwithin{equation}{section}
\numberwithin{theorem}{section}
\begin{document}
\title
[Optimal American option exercise under model uncertainty]
{
Optimal exercise decision of American options under model uncertainty
} 

\thanks{
Version 1 of the paper posted on arXiv had an incorrect Proposition 2.1, which was used to erroneously derive the equation $P_c = \overline P_c$. The proposition was removed in Ver 2, and the main theorem now assumes the equation. We would like to find sufficient conditions for the equation.
}
\date{\today}

\author{Tongseok Lim}
\address{Tongseok Lim: Mitchell E. Daniels, Jr. School of Business \newline  Purdue University, West Lafayette, Indiana 47907, USA}
\email{lim336@purdue.edu}
\onehalfspacing

\begin{abstract}
Given the marginal distribution information of the underlying asset price at two future times $T_1$ and $T_2$, we consider the problem of determining a model-free upper bound on the price of a class of American options that must be exercised at either $T_1$ or $T_2$. The model uncertainty consistent with the given marginal information is described as the martingale optimal transport problem. We show that any option exercise scheme associated with any market model that jointly maximizes the expected option payoff must be nonrandomized if the American option payoff satisfies a suitable convexity condition and the model-free price upper bound and its relaxed version coincide. The latter condition is desired to be removed under appropriate conditions on the cost and marginals.
\end{abstract}

\maketitle
\noindent\emph{Keywords: Robust finance, American option, Hedging, Martingale, Optimal transport, Duality, Dual attainment, Infinite-dimensional linear programming
}

\noindent\emph{MSC2010 Classification: {\rm 90Bxx, 90Cxx, 49Jxx, 49Kxx, 60Dxx, 60Gxx}}

\section{Introduction}
This paper was mainly inspired by Hobson and Norgilas \cite{HoNo19}, Aksamit, Deng, Obłój and Tan \cite{adot19}, as well as Beiglb{\"o}ck and Juillet \cite{bj} and  Beiglb{\"o}ck, Nutz and Touzi \cite{bnt}. A related problem in continuous time setup was studied in Bayraktar, Cox and Stoev \cite{bcs18}. We consider two future times $0 < T_1 < T_2$ and an asset price process $(X,Y)$, where $X, Y$ represents the asset price at time $T_1, T_2$, respectively. Let $\cP(\cX)$ denote the set of all probability measures/distributions over a set $\cX$ with finite first moment. Let $\mu, \nu \in \cP(\R)$ be probability measures in convex order:
\begin{align*}\label{convexorder}
\mu \preceq_c \nu \ \ \text{if} \  \ \mu(f) \le \nu(f) \ \ \text{for every convex function } f \text{ on } \R,
\end{align*}
where $\mu(f) := \E_\mu[f(X)] = \int f(x) \mu (dx)$. We consider market models that are defined by the following set of martingale transports from $\mu$ to $\nu$:\begin{align}
\cM(\mu,\nu) = \{ \pi \in \cP(\R^{2}) \, | \, \pi = {\rm Law} (X,Y), \E_\pi[Y | X]=X, {\rm Law}(X) = \mu, {\rm Law}(Y) = \nu \}. \nn
\end{align}
In finance, each $\pi \in \cM(\mu,\nu)$ represents a feasible joint law of the price $(X, Y)$ given the marginal information $\mu, \nu$ in the (two-period) market, under which $(X, Y)$ is a martingale, written as $\E_\pi [Y | X ] = X$. It is well known that the condition $\mu \preceq_c \nu$ is equivalent to $\cM(\mu,\nu) \neq \emptyset$. We refer to \cite{CKPS21, H17, GaHeTo11, D16} for further background.

We consider the cost function which describes an American option payoff 
\be\label{cost}
c = (c_1, c_2) = (c_1(x), c_2(x,y)), \q c_1,c_2 \in \R,
\ee
such that if an obligee (option holder) selects $c_1$, she receives the payout $c_1(X)$, otherwise she receives the payout $c_2(X,Y)$. Thus, in the former case, her payout is determined at time 1, whereas it is determined at time 2 in the latter. We assume she can make this choice conditional on the price $X=x$, and that she can also randomize (or split) her choice, represented by a Borel function $s: \R \to [0,1]$. This means that given $X=x$, she exercises $c_1$ with probability (or proportion) $s(x)$, otherwise $c_2$ with probability $1- s(x)$. Given a function $s : \R \to \R$ and a measure $\mu$ on $\R$, let the measure $s \mu$  be given by $s\mu (B) = \int_B s(x) \mu(dx)$. Since $\mu$ is fixed, the choice of a randomization $s$ is equivalent to the choice of $0 \le \mu_1 \le \mu$,\footnote{All measures/distributions in this paper are assumed to be non-negative.} such that with $\mu_2 := \mu - \mu_1$, $s_1:=s$, $s_2:= 1-s$ equals the Radon–Nikodym derivative $\frac{d\mu_1}{d\mu}, \frac{d\mu_2}{d\mu}$ $\mu$-a.s., respectively. This leads us to consider the optimization problem
\begin{align}\label{problem}
P_c := \sup_{\pi \in \cM(\mu,\nu)} \sup_{\mu_1 \le \mu}  \ \ \E_{\ga_1} [c_1] + \E_{\ga_2} [c_2],
\end{align}
where for a given $\pi = \pi_x \otimes \mu \in \cM(\mu,\nu)$,\footnote{Any $\pi = {\rm Law}(X,Y) \in \cP(\R^2)$, representing the joint law of the random variables $X$ and $Y$, can be written as $\pi= \pi_x \otimes {\rm Law}(X)$, where $\pi_x \in \cP(\R)$ is called a kernel of $\pi$ with respect to ${\rm Law}(X)$.  $\pi_x$ represents the conditional distribution of $Y$ given $X=x$, i.e.,  $\pi_x(B) = \cP(Y \in B \, | \, X=x )$ for all Borel set $B \subset \R$. Note that $\pi = \pi_x \otimes \mu \in \cM(\mu,\nu)$ iff $\int y\, \pi_x(dy) = x$ $\mu$-a.e.\,$x$.}
 we define $\ga_l = \pi_x \otimes \mu_l$, $l=1,2$, such that $\ga_1 + \ga_2 = \pi$ and that  $\ga_1$ and $\ga_2$ share the same kernel $\{\pi_x\}_x$ inherited from $\pi$.
 
 In view of the obligor (the person responsible for the payment of the option), a solution $(\pi, \mu_1)$ to \eqref{problem} represents a worst-possible market scenario $\pi$ combined with the option exercise scheme $\mu_1$, yielding the maximum expected payout $P_c$. 
 
We will assume the following regularity condition on $c$ throughout the paper. 

\noindent{\bf [A]} Throughout the paper, we assume that $c_1, c_2$ are continuous, $\mu \preceq_c \nu$, and that the marginals $\mu, \nu$ satisfy the following condition: there exist continuous functions $v \in L^1(\mu)$, $w \in L^1(\nu)$  such that $|c_1| + |c_2| \le v(x) + w(y) $. Note that this implies
\be
\big| \sum_l  \E_{\ga_l} [c_l] \big| \le  \sum_l  \E_{\ga_l} [| c_l |] \le  \sum_l  \E_{\pi} [| c_l |] \le \mu(v) + \nu(w)  < \infty \, \text{ for any } \pi \in \cM(\mu, \nu). \nn
\ee
This in turn implies that the problem \eqref{problem} is attained (i.e., admits an optimizer) by a standard argument in the calculus of variations \cite{Sa15}.

 \noindent\cite{HoNo19} considered a specific cost called an American put, whose payoff is given by
\be\label{Hobsoncost}
c_1(x) = (K_1 - x)^+,\q c_2(x,y) = c_2(y) = (K_2 - y)^+, \q K_1 > K_2,
\ee
and considered those option exercise schemes which are {\em pure}, or {\em non-randomized}; that is, \cite{HoNo19}  assumed that the obligee can only choose a Borel set $B \subset \R$ in which she selects $c_1$ if $x \in B$ and $c_2$ otherwise. In terms of $\mu_1$, notice that this is equivalent to the statement that $\mu_1$ and $ \mu_2$ are mutually disjoint, written as $\mu_1 \perp \mu_2$ (while $\mu_1 + \mu_2 = \mu$). In other words, \cite{HoNo19} assumed that $\mu_1, \mu_2$ must saturate $\mu$ on their respective supports. In addition, \cite{HoNo19} assumed that $\mu$ is continuous, i.e., has no atoms. Under these assumptions, \cite{HoNo19} showed that an optimal market model $\pi$ for the problem \eqref{problem} is given by the {\em left-curtain coupling} (see \cite{bj, HoNo19, HT16} for more details about this interesting martingale transport) along with an optimal exercise strategy $B$, and furthermore, the cheapest superhedge can be derived.

Now we would like to shift our focus and ask, ``Under what conditions must the optimal option exercise be pure?" That is, when will an optimal $\mu_1$ saturate $\mu$, or equivalently, achieve $\mu_1 \perp \mu_2$? Note that the problem \eqref{problem} can be rewritten as 
\begin{align}\label{problem2}
P_c =\sup_{\mu_1 \le \mu} P_c(\mu_1), \ \text{ where } \ P_c(\mu_1)  := \sup_{\pi \in \cM(\mu,\nu)} \ \E_{\ga_1} [c_1] + \E_{\ga_2} [c_2],
\end{align}
where $\ga_l = \pi_x \otimes \mu_l$, $l=1,2$. Note that the problem \eqref{problem} has a nonconvex domain in terms of the variable $(\ga_1,\ga_2)$. This is because even if  $(\ga_1, \ga_2)$, $(\ga_1', \ga_2')$ are feasible (i.e., sharing the same kernel respectively), the convex combination $(\frac{\ga_1 + \ga_1'}{2},  \frac{\ga_2 + \ga_2'}{2})$ may not share the same kernel thus infeasible, unless $\mu_1 = \mu_1'$ and $\mu_2 = \mu_2'$. On the other hand, the subproblem $P_c(\mu_1)$ has a convex domain in terms of $(\ga_1, \ga_2)$. This leads us to consider a relaxed problem \eqref{generalrelaxedproblem} with its optimal value denoted by $\overline P_c$. Clearly $P_c \le \overline P_c$; see Section \ref{dualsection} for details. Our result is the following.

\begin{theorem}\label{main}
Assume {\bf [A]} and the cost form \eqref{cost}. Suppose $y \mapsto c_2(x,y)$ is strictly convex and $c_1(x) \neq c_2(x,x)$ for $\mu$-a.e.\,$x$, and  $\nu$ is absolutely continuous with respect to the Lebesgue measure. If $P_c = \overline P_c$, then every solution $(\pi, \mu_1)$ to the problem \eqref{problem2}  satisfies $\mu_1 \perp \mu - \mu_1$. Furthermore, given any optimal candidate model $\pi$, the $\mu_1$ yielding an optimal pair $(\pi, \mu_1)$ is unique.
\end{theorem}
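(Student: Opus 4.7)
The plan is to exploit the assumption $P_c = \overline P_c$ to reduce the problem to a martingale optimal transport (MOT) problem with cost $\tilde c(x, y) := \max\{c_1(x), c_2(x, y)\}$, invoke dual attainment under $\nu$ absolutely continuous, and combine strict convexity of $c_2$ with a contradiction argument on the overlap of $\mu_1$ and $\mu - \mu_1$.

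First I would identify $\overline P_c$ with $\sup_{\pi \in \cM(\mu, \nu)} \int \tilde c \, d\pi$: for any $\pi = \ga_1 + \ga_2 \in \cM(\mu, \nu)$, the pointwise optimal split assigns $(x, y)$-mass to $\ga_1$ on $\{c_1 \ge c_2\}$ and to $\ga_2$ on the complement, giving $\E_{\ga_1}[c_1] + \E_{\ga_2}[c_2] \le \int \tilde c \, d\pi$ with equality under this split. Standard MOT duality for $\tilde c$, applicable since $\nu$ is absolutely continuous, provides $\phi \in L^1(\mu)$, $\beta \in L^1(\nu)$, and a Borel function $h$ with $\phi(x) + \beta(y) + h(x)(y-x) \ge \tilde c(x, y)$ for all $(x, y)$ and equality $\pi^*$-a.s.\ for any MOT-optimal $\pi^*$. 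The assumption $P_c = \overline P_c$ then forces, for every optimal pair $(\pi^*, \mu_1)$ of \eqref{problem2}, the induced $(\ga_1, \ga_2) = (\pi^*_x \otimes \mu_1, \pi^*_x \otimes (\mu - \mu_1))$ to attain the relaxed supremum; equivalently, $\pi^*_x$ is supported in $\{y : c_1(x) \ge c_2(x, y)\}$ for $\mu_1$-a.e.\ $x$ and in $\{y : c_1(x) \le c_2(x, y)\}$ for $(\mu - \mu_1)$-a.e.\ $x$. Setting $s_l := d\mu_l/d\mu$ and $A := \{x : s_1(x) > 0 \text{ and } s_2(x) > 0\}$, both constraints apply for $\mu$-a.e.\ $x \in A$, forcing $\pi^*_x$ to concentrate on $\{y : c_2(x, y) = c_1(x)\}$, a set containing at most two points by strict convexity of $c_2(x, \cdot)$. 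The one-point alternative is ruled out: the martingale condition would give $\pi^*_x = \delta_x$, hence $c_1(x) = c_2(x, x)$, contradicting the standing assumption. Therefore on $A$ we have $\pi^*_x = p(x) \delta_{y_-(x)} + (1 - p(x)) \delta_{y_+(x)}$, with $y_\pm(x)$ the two roots of $c_2(x, \cdot) = c_1(x)$ and $p(x)$ determined by the martingale equation.

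The crux, and what I anticipate as the main technical hurdle, is to derive a contradiction from $\mu(A) > 0$. My plan is to combine the dual certificates with absolute continuity of $\nu$: the equality $\phi(x) + \beta(y_\pm(x)) + h(x)(y_\pm(x) - x) = c_1(x)$ at both roots forces $h(x) = (\beta(y_+(x)) - \beta(y_-(x)))/(y_+(x) - y_-(x))$, while the global inequality $\beta(y) \ge \tilde c(x, y) - \phi(x) - h(x)(y - x)$ constrains $\beta$ to dominate a strictly convex function outside $[y_-(x), y_+(x)]$ with tangent-line contact at $y_\pm(x)$. Varying $x$ over $A$ generates a family of such envelope constraints, and the plan is to argue via absolute continuity of $\nu$ (which makes $\beta$ essentially determined on a Lebesgue-meaningful set) that this family is mutually inconsistent unless $A$ is $\mu$-null. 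The measure-theoretic bookkeeping of $y_\pm$ and the fine structure of $A$ under only continuity of $c_1, c_2$ is the delicate point.

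For uniqueness, fix an optimal $\pi$ and suppose $\mu_1, \mu_1' \le \mu$ both give optimal pairs with this $\pi$. The functional $\mu_1 \mapsto \E_{\ga_1}[c_1] + \E_{\ga_2}[c_2]$ with $\ga_l = \pi_x \otimes \mu_l$ is affine in $\mu_1$, so $\tfrac{1}{2}(\mu_1 + \mu_1')$ is also optimal. By the purity already proved, the densities $s_1 := d\mu_1/d\mu$, $s_1' := d\mu_1'/d\mu$, and $\tfrac{1}{2}(s_1 + s_1')$ must all be $\{0,1\}$-valued $\mu$-a.e., which forces $s_1 = s_1'$ and hence $\mu_1 = \mu_1'$.
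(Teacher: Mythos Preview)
Your proposal has a genuine gap at the very first step. The identification $\overline P_c = \sup_{\pi \in \cM(\mu,\nu)} \int \tilde c\, d\pi$ with $\tilde c = \max(c_1,c_2)$ is not correct. The ``pointwise optimal split'' $\gamma_1 = \pi|_{\{c_1 \ge c_2\}}$, $\gamma_2 = \pi|_{\{c_1 < c_2\}}$ is generally \emph{not} feasible for the relaxed problem, because $\cM_L(\mu,\nu)$ requires each $\gamma_l$ to be a martingale measure, and restricting a martingale coupling to a Borel set in $\R^2$ destroys the martingale property. Thus only $\overline P_c \le \sup_\pi \int \tilde c\, d\pi$ follows, and the inequality can be strict: take $\mu=\delta_0$, $\nu = \tfrac12(\delta_{-1}+\delta_1)$, $c_1\equiv 0$, $c_2(x,y)=y$; then any feasible $(\gamma_1,\gamma_2)$ must have each $\gamma_l$ proportional to the unique $\pi$, giving $\overline P_c = 0$, while $\int \tilde c\, d\pi = \tfrac12$. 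Consequently your deduction that $\pi^*_x$ is supported in $\{c_1(x)\ge c_2(x,\cdot)\}$ for $\mu_1$-a.e.\ $x$ (and the analogous statement for $\mu_2$) collapses, and the subsequent two-point analysis never gets off the ground. You also acknowledge that even granting the identification, the two-point case has no actual argument---the ``family of envelope constraints'' plan is not a proof.

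The paper's proof avoids both difficulties by working with the \emph{vectorial} dual of the relaxed problem (Proposition~\ref{dualattainment}), which produces \emph{two} Lagrange multipliers $\theta_1,\theta_2$ and a convex $\psi$ satisfying
\[
\phi(x)+\psi(y)+\theta_l(x)(y-x) \ge c_l(x,y), \qquad l=1,2,
\]
with equality $\gamma_l$-a.e.\ for any relaxed optimizer. Since $P_c=\overline P_c$, the common mass $\rho = \mu_1\wedge\mu_2$ sees \emph{both} equalities along the shared kernel $\pi_x$. The $l=1$ equality, because $c_1$ does not depend on $y$, forces $\psi$ to be \emph{affine} on the convex hull of $\spt(\pi_x)$ for $\rho$-a.e.\ $x$ (this is where the differentiability of the convex $\psi$, $\nu$-a.e.\ via absolute continuity, is used). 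Substituting that linearity into the differentiated $l=2$ equality yields $(c_2)_y(x,y)=\psi'(x)-h_2(x)$, and strict convexity of $c_2(x,\cdot)$ then pins $y$ uniquely, hence $\pi_x=\delta_x$ and $c_1(x)=c_2(x,x)$ on $\rho$, contradicting the hypothesis unless $\rho\equiv 0$. The separate $\theta_1$ is precisely what delivers the affinity of $\psi$ on $\spt(\pi_x)$; a single $h$ coming from a $\tilde c$-dual cannot produce this, which is why your two-point obstruction is real and not merely a technicality. Your uniqueness argument via convex combination is correct and matches the paper.
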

We note that the condition $c_1(x) > c_2(x,x)$ is natural because, if $c_1(x) \le c_2(x,x)$ and $c_2$ is convex in $y$, it is always optimal to choose $c_2(x,y)$ by Jensen's inequality $c_2(x,x) \le \int c_2(x,y) \pi_x(dy)$. Theorem \ref{main} says that in this case, every optimal exercise, or stopping, is nonrandomized. Evidently, the problem \eqref{problem} can be viewed as an optimal stopping problem, in which the option holder either stops at time 1 and receives the sure reward $c_1(x)$, or goes and receives the reward $c_2(x,y)$ (which is stochastic at time 1) at time 2. This naturally places the theorem in the context of the vast literature on the Skorokhod embedding problem \cite{bch, Ho11, Obloj}, with the key difference that we now face uncertainty in the family of models $\cM(\mu, \nu)$. Such model uncertainty was also considered in \cite{bcs18, ds1} in continuous time setup. For more results on American options and their robust hedging, we refer to \cite{bz16, bz17, bz19}.  

In the optimal transport literature, the absolute continuity of $\mu$ is typically assumed in order to derive non-randomizing solutions, known as Monge solutions. Continuity of $\mu$ was also assumed in \cite{HoNo19}. In contrast, Theorem \ref{main} assumes the absolute continuity of $\nu$, while making no assumptions about $\mu$. On the other hand, the equation assumption $P_c = \overline P_c$ imposed in the theorem appears to be highly  restrictive, prompting us to seek a sufficient condition that yields the equation. For example, can the absolute continuity of $\mu$ with respect to Lebesgue measure imply the equation (with suitable additional conditions on the cost)?

Finally, the uniqueness of $\mu_1$ given a fixed model $\pi$ is obtained by a standard argument in optimal transport through mixing two optimal solutions and invoking the result $\mu_1 \perp \mu - \mu_1$. When $(\pi, \mu_1)$ and $(\pi', \mu_1')$ are both optimal (with possibly $\pi \ne \pi'$), it is an open question whether $\mu_1 = \mu_1'$ under suitable conditions. This is due to the nonconvexity of the domain of the problem \eqref{problem} in terms of $(\ga_1,\ga_2)$.

The remainder of the paper is structured as follows. The theorem will be proved utilizing a duality and its attainment result. They will be discussed in Section \ref{dualsection}. Section \ref{proofs} then presents proofs of the results.

\section{Duality}\label{dualsection}
In this section, we consider cost functions more general than \eqref{cost}, such as
\be\label{cost2}
\vec c = (c_1, c_2,...,c_L), \ c_l = c_l(x,y) \in \R, \ l=1,2,...,L.
\ee
Throughout this section, we assume the following.\\
 {\bf [A']}  $c_l$ are continuous for all $l$,  $\mu \preceq_c \nu$, and $\sum_{l=1}^L |c_l(x,y)| \le v(x) + w(y) $ for some continuous functions $v \in L^1(\mu)$, $w \in L^1(\nu)$. 

 As noted, the domain of the problem \eqref{problem}, in terms of the variable $(\ga_1,\ga_2)$, is nonconvex. This leads us to consider a relaxed problem for \eqref{problem}; see also \cite{adot19} for related results. Let $\cM := \cup_{\mu \preceq_c \nu} \cM(\mu, \nu)$, that is, $\cM$ is the set of all martingale transports between some probability marginals in convex order, hence $\cM \subset \cP(\R^2)$. Let $\overline \cM$ be the set of all martingale transports with arbitrary nonnegative finite total mass, that is, $\ga \in \overline \cM$ if $\ga \equiv 0$ or $\ga / ||\ga|| \in \cM$ where $|| \ga || = \int_{\R^2} \ga(dx,dy) \in (0, \infty)$ denotes the total mass. Define
\be
\cM_L(\mu, \nu) := \bigg\{\vec \ga = (\ga_1,...,\ga_L) \, \bigg| \, \sum_{l=1}^L \ga_l \in \cM(\mu,\nu) \text{ and } \ga_l \in  \overline \cM \text { for all } l=1,...,L. \bigg\}\nn
\ee
$\cM_L(\mu, \nu)$ is clearly convex. Now we define the relaxed problem 
\begin{align}\label{generalrelaxedproblem}
\overline P_c  := \sup_{\vec \ga \in \cM_L(\mu,\nu)} \sum_{l=1}^L \E_{\ga_l} [c_l]. \end{align}
The difference is that in \eqref{problem} (with the generalized cost \eqref{cost2}), $\{ \ga_l\}_l$ are assumed to have the same kernel $\pi_x$ inherited from a model $\pi \in \cM(\mu, \nu)$, whereas in \eqref{generalrelaxedproblem}, this restriction is relaxed. Both problems satisfy the condition $\sum_l \ga_l \in \cM(\mu,\nu)$. Hence, $P_c \le \overline P_c$. 

We turn to  the dual problem of \eqref{generalrelaxedproblem}. Define $\overline \Psi_c$ to be the space of functions $( \phi, \psi, \vec \tta) = ( \phi, \psi, \tta_1,...,\tta_L)$ such that $\phi \in C(\R) \cap L^1(\mu)$,  $\psi \in C(\R) \cap L^1(\nu)$,  $\tta_l \in C_b(\R)$, satisfying
\begin{align}
c_l(x,y) \le \phi(x) + \psi (y) + \tta_l(x) (y-x) \ \text{ for all } l=1,...,L \text{ and } (x,y) \in \R^2. \label{relaxedsubdualineq}
\end{align}
The dual problem to \eqref{generalrelaxedproblem} is now given by
\be\label{generalrelaxeddualproblem}
\overline D_c := \inf_{(\phi, \psi, \vec \tta) \in \overline\Psi_c }  \mu(\phi) +  \nu(\psi).
\ee
A duality result is the following.

\begin{proposition}\label{duality2}
Assume {\bf [A']}. Then $\overline P_c =  \overline D_c$.
\end{proposition}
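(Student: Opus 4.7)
The plan is to prove the duality via the Hahn-Banach separation approach to martingale optimal transport, in the spirit of Beiglb\"ock-Henry-Labord\`ere-Penkner, adapted to the vector-valued (multi-cost) setting here.

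Weak duality $\overline{P}_c \le \overline{D}_c$ is immediate. Given $\vec\ga \in \cM_L(\mu,\nu)$ and $(\phi,\psi,\vec\tta) \in \overline{\Psi}_c$, I would integrate \eqref{relaxedsubdualineq} against each $\ga_l$, sum over $l$, and use the martingale property of each $\ga_l$ (so that $\int \tta_l(x)(y-x) \, d\ga_l = 0$) together with $\sum_l \ga_l \in \cM(\mu,\nu)$ to conclude $\sum_l \E_{\ga_l}[c_l] \le \mu(\phi) + \nu(\psi)$.

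For the nontrivial direction $\overline{P}_c \ge \overline{D}_c$, I would introduce the space $\cE$ of continuous vector-valued costs $\vec g = (g_1,\dots,g_L)$ on $\R^2$ with $\sum_l |g_l| \le v(x) + w(y)$, and define the functional
\be
D(\vec g) := \inf\{\mu(\phi) + \nu(\psi) : (\phi,\psi,\vec\tta) \in \overline{\Psi}_g\}. \nn
\ee
Next I would verify that $D$ is sublinear (subadditive and positively homogeneous) with $D(0) = 0$. By the Hahn-Banach extension theorem, this yields a linear functional $\Lambda$ on $\cE$ with $\Lambda \le D$ and $\Lambda(\vec c) = \overline{D}_c$. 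Positivity of $\Lambda$ follows because for $\vec g \ge 0$ the triple $(0,0,\vec 0)$ is feasible for $D(-\vec g)$, giving $D(-\vec g) \le 0$ and hence $\Lambda(\vec g) \ge 0$. By a Riesz-type representation on the weighted space of continuous functions dominated by $v+w$, one obtains nonnegative Borel measures $\vec\ga = (\ga_1,\dots,\ga_L)$ such that $\Lambda(\vec g) = \sum_l \int g_l \, d\ga_l$.

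It remains to verify $\vec\ga \in \cM_L(\mu,\nu)$ by probing $\Lambda \le D$ on specific test multi-costs. Taking $g_l(x,y) = \pm\phi(x)$ for all $l$, the feasibility of $(\pm\phi, 0, \vec 0)$ in $\overline{\Psi}_{\pm g}$ forces $\sum_l \int \phi \, d\ga_l = \mu(\phi)$, giving the correct $X$-marginal of $\sum_l \ga_l$; analogously with $g_l(x,y) = \pm\psi(y)$ one obtains the $Y$-marginal $\nu$. Then taking $g_l(x,y) = \pm\tta(x)(y-x)$ for a single index $l$ and $g_k \equiv 0$ otherwise yields $\int \tta(x)(y-x) \, d\ga_l = 0$ for all $\tta \in C_b(\R)$, so each $\ga_l \in \overline\cM$. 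Hence $\vec\ga \in \cM_L(\mu,\nu)$ and $\overline{P}_c \ge \sum_l \int c_l \, d\ga_l = \Lambda(\vec c) = \overline{D}_c$. The main obstacle is handling the unbounded envelope $v+w$: the Hahn-Banach/Riesz machinery is cleanest on $C_b$, whereas the dual allows $\phi \in C(\R)\cap L^1(\mu)$, $\psi \in C(\R)\cap L^1(\nu)$ with potentially unbounded growth. I would address this either by working directly on the weighted space of continuous costs dominated by $v(x)+w(y)$ (via a Daniell-type extension of Riesz), or by first truncating each $c_l$ to obtain duality in the bounded regime and then passing to the limit via dominated convergence, justified by the $L^1$-envelope in assumption \textbf{[A']}.
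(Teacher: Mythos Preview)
Your argument is correct in outline and follows the Beiglb\"ock--Henry-Labord\`ere--Penkner route (sublinear functional, Hahn--Banach, Riesz, then testing against separable costs to identify marginals and the martingale property), adapted to the $L$-fold cost. This is a genuinely different approach from the paper's: the paper proves Proposition~\ref{duality2} by a Lagrangian minimax argument, writing $\overline P_c$ as a $\sup_{\vec\ga}\inf_{(\phi,\psi,\vec\tta)}$ over all nonnegative measures $\ga_l$ (with the constraints enforced by the inner infimum), invoking a minimax theorem to swap $\sup$ and $\inf$, and then reading off $\overline D_c$ from the resulting $\inf\sup$. The paper's proof is shorter but leans on an unspecified ``standard minimax theorem'' whose hypotheses (some compactness or lower semicontinuity in an appropriate topology) are not spelled out; your approach is longer but more self-contained, and it produces a primal optimizer $\vec\ga\in\cM_L(\mu,\nu)$ as a byproduct. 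One point to tighten: your space $\cE$ is defined via the fixed envelope $v+w$ from {\bf [A']}, but the test costs $g_l(x,y)=\pm\phi(x)$, $\pm\psi(y)$, $\pm\tta(x)(y-x)$ need to live in $\cE$; since $\mu,\nu$ have finite first moments you may enlarge $v,w$ to $v(x)+|x|$, $w(y)+|y|$ (still in $L^1$) so that bounded $\tta$ and compactly supported $\phi,\psi$ are admissible, which suffices to pin down the marginals and the martingale constraint.
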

For the financial meaning of the dual problems in terms of American option superhedging, we refer to \cite{adot19, bhz15, bz16, bz17, bz19, HoNe17, HoNo19, N07}. The additional element required to prove Theorem \ref{main} is the dual attainment result, which asserts that there is an appropriate solution to the dual problem \eqref{generalrelaxeddualproblem}. For $\xi \in \cP(\R)$, its potential function is defined by $u_\xi (x) := \int |x-y| d\xi(y)$. Then we say that a pair of probabilities $(\mu , \nu)$ in convex order is irreducible if the set $I:=\{ x \in \R \, | \, u_{\mu}(x) < u_{\nu}(x) \}$ is a connected (open) interval containing the full mass of $\mu$, i.e., $\mu(I)=\mu(\R)$. 

\begin{proposition}\label{dualattainment}
Assume {\bf [A']} and suppose $(\mu, \nu)$ is irreducible. Then there exists a {\em dual optimizer} $( \phi, \psi, \vec \tta)$, $\phi, \psi : \R \to \R \cup \{+\infty\}$, $\tta_l : \R \to \R$, that satisfies \eqref{relaxedsubdualineq} tightly in the following pathwise sense (but needs not be in $\overline \Psi_c$): 
\begin{align}
c_l(x,y) = \phi(x) + \psi (y) + \tta_l(x) (y-x) \ \ \ga_l - a.e., \ \text{ for all }\, l=1,...,L \label{relaxedsubdualeq}
\end{align}
for every solution $\vec \ga = (\ga_1,...,\ga_L)$ to the problem \eqref{generalrelaxedproblem}.
\end{proposition}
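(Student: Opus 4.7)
My plan is to reduce the multi-cost dual attainment to the classical single-cost martingale optimal transport dual attainment theorem of Beiglb\"ock-Nutz-Touzi, which is available under irreducibility of $(\mu,\nu)$. Fix any optimizer $\vec\ga = (\ga_1,\dots,\ga_L)$ of $\overline P_c$, set $\pi:=\sum_l\ga_l\in\cM(\mu,\nu)$, and decompose $\ga_l = f_l\,\pi$ via Radon-Nikodym ($0\le f_l\le 1$, $\sum_l f_l = 1$ $\pi$-a.e.). For $\mu$-a.e.\,$x$ I would analyze the fiberwise linear program
\[
\max_{f_l\ge 0,\ \sum_l f_l = 1,\ \int(y-x)f_l(y)\pi_x(dy)=0\ \forall l}\ \sum_l \int c_l(x,y)\, f_l(y)\,\pi_x(dy),
\]
whose LP dual is $\min_{\vec\tta\in\R^L}\int \max_l[c_l(x,y) - \tta_l(y-x)]\,\pi_x(dy)$. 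Slater is trivially satisfied by $f_l\equiv 1/L$ (the martingale constraint reduces to $\int(y-x)\,d\pi_x=0$, which holds because $\pi\in\cM(\mu,\nu)$), giving strong duality; coercivity of the dual objective in $\vec\tta$ comes from the fact that $\pi_x\ne\delta_x$ forces $\pi_x$ to charge both sides of $x$ by the martingale property. Hence a minimizer $\vec\tta(x)\in\R^L$ exists, and a Borel selection $x\mapsto\vec\tta(x)$ exists by Kuratowski-Ryll-Nardzewski applied to the argmin multifunction. Setting $\lambda(x,y):=\max_l[c_l(x,y)-\tta_l(x)(y-x)]$, KKT/complementary slackness yields the key identity $c_l(x,y) - \tta_l(x)(y-x) = \lambda(x,y)$ for $\ga_l$-a.e. $(x,y)$ and every $l$.

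Next I would show $\pi$ optimizes the single-cost MOT with cost $\lambda$. For any $\vec\ga{}' = (\ga_1',\dots,\ga_L')\in\cM_L(\mu,\nu)$ with $\pi' := \sum_l\ga_l'$, each $\ga_l'$ is a scaled martingale transport, so $\int\tta_l(x)(y-x)\,d\ga_l'=0$; combined with the pointwise bound $c_l-\tta_l(y-x)\le\lambda$,
\[
\sum_l\E_{\ga_l'}[c_l]=\sum_l \int [c_l-\tta_l(y-x)]\,d\ga_l'\le\sum_l \int \lambda\,d\ga_l' = \int\lambda\,d\pi'.
\]
Step 1 gives $\overline P_c = \sum_l\E_{\ga_l}[c_l]=\int\lambda\,d\pi$, so $\pi$ attains $\sup_{\pi'\in\cM(\mu,\nu)}\int\lambda\,d\pi'$. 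Applying the BNT generalized dual attainment theorem to this classical MOT (using irreducibility of $(\mu,\nu)$ and $\pi$-integrability of $\lambda$) delivers $\phi,\psi:\R\to\R\cup\{+\infty\}$ and $\vartheta:\R\to\R$ with $\lambda(x,y)=\phi(x)+\psi(y)+\vartheta(x)(y-x)$ $\pi$-a.e. Restricting to $\supp\ga_l\subset\supp\pi$ and substituting Step 1's identity gives $c_l(x,y)=\phi(x)+\psi(y)+[\tta_l(x)+\vartheta(x)](y-x)$ $\ga_l$-a.e.; replacing $\tta_l\leftarrow\tta_l+\vartheta$ produces the claimed \eqref{relaxedsubdualeq}.

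The main obstacle is the bookkeeping of Step 1 together with its compatibility with Step 3: the fiberwise strong LP duality is standard once Slater is checked, but the resulting $x\mapsto\vec\tta(x)$ is only Borel and a priori unbounded, which complicates verifying that the auxiliary cost $\lambda$ satisfies the integrability and regularity hypotheses needed for BNT's attainment theorem. One expects to handle this by truncation on an exhaustion of the irreducible open interval $I$, and to set $\vec\tta(x)=0$ on the degenerate fibers where $\pi_x=\delta_x$ (on which no constraint is active, so $\lambda(x,y)=c_1(x,x)$ and the identity is trivial). A secondary subtlety is that the per-$l$ martingale property of each $\ga_l'$ in $\cM_L(\mu,\nu)$ must be combined with enough integrability of $\tta_l$ against $\ga_l'$ for the identity $\int\tta_l(x)(y-x)\,d\ga_l'=0$ to be valid, which is the source of the relaxed non-inclusion of $(\phi,\psi,\vec\tta)$ in $\overline\Psi_c$ stated in the proposition.
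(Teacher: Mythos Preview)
Your reduction to a single--cost MOT has a genuine gap at the step ``so $\pi$ attains $\sup_{\pi'\in\cM(\mu,\nu)}\int\lambda\,d\pi'$.'' From the displayed inequality you only obtain $\sum_l\E_{\ga_l'}[c_l]\le\int\lambda\,d\pi'$ for every $\vec\ga'\in\cM_L(\mu,\nu)$, hence $\overline P_c\le\sup_{\pi'}\int\lambda\,d\pi'$. Together with $\overline P_c=\int\lambda\,d\pi$ this yields only the triviality $\int\lambda\,d\pi\le\sup_{\pi'}\int\lambda\,d\pi'$; it does not give the reverse inequality you need. In fact there is no reason for $\int\lambda\,d\pi'\le\overline P_c$: the multipliers $\vec\tta(x)$ were tuned to the fibers $\pi_x$, and for a different $\pi'$ the quantity $\int\max_l[c_l-\tta_l(x)(y-x)]\,\pi'_x(dy)$ is in general strictly larger than the fiberwise primal value computed with $\pi'_x$. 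Without optimality of $\pi$ for the cost $\lambda$, BNT's attainment gives equality only on the support of some \emph{other} $\lambda$--optimal coupling, and you cannot transfer it back to the original $\ga_l$.

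Two further obstructions compound this. First, your $\lambda$ is merely Borel in $x$ (through the measurable selection $x\mapsto\vec\tta(x)$) and need not satisfy the upper semicontinuity or growth hypotheses under which the BNT attainment theorem is stated; the truncation you suggest does not obviously restore these. Second, the entire construction depends on the chosen primal optimizer $\vec\ga$, whereas the proposition asserts a single $(\phi,\psi,\vec\tta)$ valid for \emph{every} optimizer; with your approach this universality would have to be argued separately. The paper avoids all three issues by working directly on the dual side: it takes an approximating sequence $(\phi_n,\psi_n,\vec\tta_n)\in\overline\Psi_c$ from Proposition~\ref{duality2}, builds auxiliary convex envelopes $\chi_n=\sup_l\sup_x[f_n(x)+h_{l,n}(x)(y-x)]$, and uses irreducibility to get locally uniform bounds and Koml\'os--type compactness, extracting pointwise limits $\phi,\psi,\chi$ and then constructing each $\tta_l$ from the subdifferential of a convex envelope $H_l(x,y)=\mathrm{conv}[\psi(\cdot)-c_l(x,\cdot)](y)$. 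Optimality for every primal maximizer is then obtained by a Fatou argument that never singles out a particular $\vec\ga$.
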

We emphasize that $(\phi, \psi, \vec \tta)$ may not be in $\overline \Psi_c$ but are only measurable, with $\phi, \psi$ real-valued $\mu, \nu$-a.s., respectively. They need not be integrable nor continuous. 

\section{Proofs}\label{proofs}

\begin{proof}[Proof of Proposition \ref{duality2}] Let $\cN$ be the set of all nonnegative finite measures on $\R^2$ (that do not need to be martingales.) For $\ga \in \cN$, let $\ga^X, \ga^Y$ denote its marginal on the $x,y$-coordinate respectively. Let $\phi \in C(\R) \cap L^1(\mu)$,  $\psi \in C(\R) \cap L^1(\nu)$,  $\tta_l \in C_b(\R)$. We assert that the following equalities hold:
\begin{align*}
&\overline P_c  = \sup_{\vec \ga \in \cM_L(\mu,\nu)} \sum_{l=1}^L \E_{\ga_l} [c_l]\\
&=   \sup_{\ga_l \in \cN\, \forall l} \inf_{(\phi, \psi, \vec \tta)} \textstyle\sum_l \ga_l (c_l) + (\mu - \sum_l \ga_l^X )(\phi) + (\nu - \sum_l \ga_l^Y )(\psi) - \sum_l \ga_l ( \tta_l(x) (y-x) )\\
&=    \inf_{(\phi, \psi, \vec \tta)} \sup_{\ga_l \in \cN\, \forall l} \textstyle
\mu( \phi) + \nu(\psi) + \sum_l \ga_l ( c_l(x,y) - \phi(x) - \psi(y) - \tta_l(x)(y-x) ) \\
&= \inf_{c_l(x,y) \le  \phi(x) + \psi(y) +  \tta_l(x)(y-x)\, \forall l }   \mu( \phi) +\nu(\psi) = \overline D_c.
  \end{align*}
The derivation of the equalities is fairly standard: the second equality holds because the infimum achieves $-\infty$ as soon as $\sum_l \ga_l^X \ne \mu$, $\sum_l \ga_l^Y \ne \nu$, or $\ga_l \notin \overline \cM$, implying that $\vec \ga$ in the second line must be in $\cM_L(\mu,\nu)$ to achieve the first supremum. The third equality is based on a standard minimax theorem, which asserts that the equality holds when the sup and inf are swapped. Because the objective function is bilinear, i.e., linear in each variable ($(\ga_l)_l$ and $( \phi, \psi, \vec \tta)$), the minimax theorem holds in this case and we omit the detail. The fourth equality is because, if $c_l(x,y) - \phi(x) - \psi(y) - \tta_l(x)(y-x) >0$ for some $(x,y) \in \R^2$, one can select $\ga_l \in \cN$ such that the last supremum in the third line achieves $+\infty$, which hinders to achieve the first infimum. This implies $c_l(x,y) - \phi(x) - \psi(y) - \tta_l(x)(y-x) \le 0$ for all $(x,y)$, in which case it is best to choose $\ga_l \equiv 0$ for the supremum in the third line.
\end{proof}

\begin{proof}[Proof of Proposition \ref{dualattainment}]
The proof consists of extending the ideas in \cite{bj, bnt} to the vectorial cost \eqref{cost2}. We will follow the five steps illustrated in \cite{Lim23}, thereby omitting some details here but referring to the corresponding steps in \cite{Lim23}.

{\bf Step 1.} $\sum_{l=1}^L |c_l(x,y)| \le v(x) + w(y) $ for some continuous functions $v \in L^1(\mu)$, $w \in L^1(\nu)$. A dual optimizer exists for $\vec c$ iff so does for $\tilde c := ( c_l(x,y)+v(x) + w(y))_l$. Thus by replacing $\vec c = (c_1,...,c_L)$ with $\tilde c$, from now on we assume $c_l \ge 0$ for all $l$.

As $\overline P_c = \overline D_c \in \R$, we can find an approximating dual optimizer $(\phi_n, \psi_n, \tta_{l,n}) \in \overline \Psi_c$, ${n \in \N}$,  such that the following duality holds (for all $l=1,...,L$):
\begin{align}
\label{dual0} & \phi_{n} (x) + \psi_n(y) + \tta_{l,n}(x)(y-x) \ge c_l(x,y) \ge 0,\\
\label{maximizing} &\mu(\phi_n) + \nu(\psi_n) \searrow \overline P_c
\ \ \text{as} \ \ n \to \infty.
\end{align}
Define $f_n = - \phi_n$, $h_{l,n} = -\tta_{l,n}$, so that \eqref{dual0} becomes
\begin{align}
\label{dual}  f_{n} (x) +  h_{l,n}(x)(y-x) \le \psi_n(y) - c_l(x,y) \le \psi_n(y).
\end{align}
Define the convex functions
\begin{align}
\label{chidefinition}
\chi_{l,n}(y):= \sup_{x \in \R} f_{n} (x) +  h_{l,n}(x)(y-x), \q \chi_n := \sup_{l=1,...,L} \chi_{l,n}. 
\end{align}
Notice $\chi_{l,n}(y) \ge f_n(y) + h_{l,n}(y)(y-y) = f_n(y)$ for all $y \in \R$. Hence,
\begin{align}\label{chiineq}
f_n  \le \chi_n \le \psi_n \  \text{ for all } n.
\end{align}
By \eqref{maximizing}, this yields the uniform integral bound
\begin{align}\label{intbypart}
\int \chi_{n} \,d(\nu-\mu) \le \nu(\psi_n) - \mu( f_n) \le C \ \text{ for all } l=1,...,L \text{ and } n \in \N.
\end{align}
Using \eqref{intbypart} and the assumption that $(\mu,\nu)$ is irreducible, a local uniform boundedness of $\{\chi_n\}_n$ can be obtained (cf. Step 1 in the proof of \cite[Theorem 1.2]{Lim23}): there exists an increasing sequence of compact intervals $J_{k}:=[c_{k}, d_{k}]$ and constants $M_k \ge 0$ for each $k \in \N$, such that $\cup_{k=1}^\infty J_k = J$, and
\be\label{boundconvex}
0 \le \sup_n \chi_{n} \le M_{k}\, \text{ in }\, J_k.
\ee
{\bf Step 2.} Given any approximating dual optimizer $(\phi_n, \psi_n, \tta_{l,n})$ satisfying \eqref{maximizing}, \eqref{dual}, the goal is to suitably modify it and deduce pointwise convergence of $\phi_n, \psi_n$ to some functions $\phi, \psi$ $\mu, \nu$-a.s. as $n \to \infty$, repectively, where $\phi,\psi \in \R \cup \{+\infty\}$ is $\mu, \nu$-a.s. finite. From convexity of $\chi_n$ with $\mu \preceq_c \nu$, we deduce, for all $n$,
\begin{align}\label{L1bound1}
C \ge \nu(\psi_n) - \mu(f_n) \ge \nu(\chi_n) - \mu(f_n) \ge \mu(\chi_n) - \mu(f_n) = || \chi_n - f_n ||_{L^1(\mu)},
\end{align}
Meanwhile, \eqref{dual} gives  
$f_{n} (x) +  h_{l,n}(x)(y-x) -\psi_n (y) \le -c_l(x,y) \le 0$, hence
\begin{align*}
f_{n} (x) +  h_{l,n}(x)(y-x) -\psi_n (y) \le \chi_n(y) -\psi_n (y) \le 0.
\end{align*}
Integrating by any $\pi \in \cM(\mu,\nu)$ implies
\begin{align}\label{L1bound2}
|| \psi_n - \chi_n ||_{L^1(\nu)} \le \nu(\psi_n) - \mu(f_n) \le C \ \text{ for all } n.
\end{align}
These uniform $L^1$ bounds, combined with the local uniform bound \eqref{boundconvex} and Koml{\'o}s compactness theorem, can imply the desired almost sure convergence of $\{\phi_n\}$ and $\{\psi_n\}$ as presented in \cite{bnt} and in Step 2 in the proof of \cite[Theorem 1.2]{Lim23}, thus we omit the detail here. Also, by following Step 3 in the same proof, one can deduce the following pointwise convergence of $\chi_n$ to a convex function $\chi$
\be
\lim_{n \to \infty} \chi_n(y) = \chi(y) \in \R \, \text{ for every } \, y \in J.
\ee

{\bf Step 3.}  We have obtained the almost sure limit functions $\phi, \psi$, with $f := -\phi$. We may define $\phi := +\infty$ on a $\mu$-null set which includes $\R \setminus I$, and  $\psi := +\infty$ on a $\nu$-null set  which includes $\R \setminus J$, so that they are defined everywhere on $\R$. We will show there exists a function $\tta_l: \R \to \R$, with $h_l := -\tta_l$, $l=1,...,L$, such that 
\begin{align}\label{duallimit}
 \phi (x) + \psi(y) + \tta_{l}(x)(y-x) \ge c_l(x,y).
\end{align}
For any function $f : \R \to \R \cup \{+\infty\}$ which is bounded below by an affine function, let ${\rm conv}[f]:\R \to \R \cup \{+\infty\}$ denote the lower semi-continuous convex envelope of $f$, that is the supremum of all affine functions $\la$ satisfying $ \la \le f$ (If there is no such $\la$, let ${\rm conv}[f] \equiv -\infty$.) Set $H_{l,n}(x, y) := {\rm conv}[ \psi_{n} (\,\cdot\,) - c_l(x,\,\cdot\,) ](y)$. By \eqref{dual},
\begin{align}\label{a1}
f_{n} (x) +  h_{l,n}(x)(y-x) \le H_{l,n}(x, y) \le  \psi_n(y) - c_l(x,y),
\end{align}
because the left hand side is affine in $y$. Letting $y=x$ gives $f_n(x) \le H_{l,n}(x,x)$. 

Next, since the $\limsup$ of convex functions is convex, we have
\begin{align*}
\limsup_{n \to \infty} H_{l,n}(x,y) &\le {\rm conv}[\limsup_{n \to \infty}\big{(}  \psi_{n} (\,\cdot\,) - c_l(x,\,\cdot\,) \big{)}](y)  \\
&\le {\rm conv}[ \psi (\,\cdot\,) - c_l(x,\,\cdot\,)](y) =:H_l(x,y).
\end{align*}
Then by the convergence $f_n \to f$ and the  definition of $H_l(x,y)$, we get
\[
f(x) \le H_l(x,x), \ \text{and} \ H_l(x,y) \le \psi(y) - c_l(x,y).
\]
Set $A := \{ x \in I \, | \, \lim_{n\to\infty} f_n(x) = f(x) \in \R \}$, so that $\mu(A)=1$. Since $y \mapsto H_l(x,y)$ is continuous in $J$ for every $x \in A$ due to the convexity of $y \mapsto H_l(x,y)$ and $\nu$-a.s. finiteness of $\psi$, the subdifferential $\partial H_l(x, \,\cdot\,) (y)$ is nonempty, convex and compact for every $y \in I = {\rm int}(J)$. This allows us to choose a measurable function $h_l : A \to \R$ satisfying $h_l(x) \in \partial H_l(x, \,\cdot\,)(x)$. Such choice  yields \eqref{duallimit} as follows:
 \[
 f(x) + h_l(x) (y-x) \le H_l(x,x) + h_l(x) (y-x) \le H_l(x,y) \le  \psi(y) - c_l(x,y).
  \]
We may define $h_l \equiv 0$ on $\R \setminus A$, noting that $f := -\infty$ on $\R \setminus A$.

{\bf Step 4.} We will show that for any functions $\tta_l : \R \to \R$, $l=1,...,L$ that satisfies  \eqref{duallimit} (whose existence was shown in the previous step), and for any maximizer $\vec \ga^* = (\ga_1^*,...,\ga_L^*) \in \cM_L(\mu, \nu)$ for the problem \eqref{generalrelaxedproblem}, it holds
\begin{align}\label{pointwisedualeq}
 \phi(x) + \psi(y) + \tta_{l}(x)(y-x) = c_l(x,y) \quad \ga_l^* - a.e. \text{ for all } l=1,...,L.
\end{align}
For any $\vec \ga = (\ga_1,...,\ga_L) \in \cM_L(\mu, \nu)$, Assumption {\bf [A]} yields $c_l \in L^1(\ga_l)$. We claim
\begin{align}\label{claim1}
\liminf_{n \to \infty}& \sum_{l=1}^L \int \big( \phi_n(x) + \psi_n(y) + \tta_{l,n}(x)(y-x)  \big)  d\ga_l\\
&\ge  \sum_{l=1}^L \int \big( \phi(x) + \psi(y) + \tta_{l}(x)(y-x)  \big) d\ga_l \ \text{ for every } l. \nn
\end{align}
To see how the claim implies \eqref{pointwisedualeq}, let $\vec \ga^*$ be any maximizer for \eqref{generalrelaxedproblem}. Then
\begin{align*}
\overline P_c&=  \lim_{n \to \infty} \sum_{l=1}^L \int \big( \phi_n(x) + \psi_n(y) + \tta_{l,n}(x)(y-x)  \big)  d\ga_l^*   \\ 
&\ge \sum_{l=1}^L   \liminf_{n \to \infty} \int \big( \phi_n(x) + \psi_n(y) + \tta_{l,n}(x)(y-x)  \big)  d\ga_l^*   \\ 
&\ge  \sum_{l=1}^L \int  \big( \phi(x) + \psi(y) + \tta_{l}(x)(y-x)  \big) d\ga_l^* \\ 
&\ge    \sum_{l=1}^L \int  c_l(x,y) \,d\ga_l^* = \overline P_c,
\end{align*}
hence equality holds throughout. Notice this yields \eqref{pointwisedualeq}, hence the proposition.

To prove \eqref{claim1}, fix any $\vec \ga = (\ga_1,...,\ga_L) \in \cM_L(\mu, \nu)$. The nonnegativity \eqref{dual0} gives $\ga_l^X(\phi_n) + \ga_l^Y(\psi_n) \ge 0$, and \eqref{maximizing} gives $\sum_{l=1}^L ( \ga_l^X(\phi_n) + \ga_l^Y(\psi_n))  = \mu(\phi_n) + \nu(\psi_n) \searrow \overline P_c$. This implies the sequence $\{\ga_l^X(\phi_n) + \ga_l^Y(\psi_n)\}_{n }$ is bounded for all $l$. With this and \eqref{chiineq}, as in Step 2 (but $\ga_l^X \preceq_c \ga_l^Y$ instead of $\mu \preceq_c \nu$), we deduce
\begin{align*}
\sup_n || \chi_{n} +\phi_n ||_{L^1(\ga_l^X)} < \infty, \quad \sup_n ||\psi_n -  \chi_{n}  ||_{L^1(\ga_l^Y)} < \infty, \q \text{for all } l.
\end{align*}
From this, since $\phi_n \to \phi$, $\psi_n \to \psi$, $\chi_n \to \chi$, by Fatou's lemma, we get
\begin{align*}
\chi + \phi \in L^1(\ga_l^X)&, \quad \psi -  \chi  \in L^1(\ga_l^Y),\\
\liminf_{n \to \infty} \int (\chi_n  + \phi_n )\,d\ga_l^X \ge  \int (\chi + \phi )d\ga_l^X&, \ \liminf_{n \to \infty} \int ( \psi_n - \chi_n)\,d\ga_l^Y \ge  \int ( \psi  - \chi )d\ga_l^Y.
\end{align*}
This allows us to proceed
\begin{align*}
&\liminf_{n \to \infty}  \int \big{(} \phi_n(x) + \psi_n(y) + \tta_{l,n}(x)(y-x)  \big{)}  d\ga_l  \\
&= \liminf_{n \to \infty}  \int \big{(} \phi_n(x) + \chi_n(x) - \chi_n(y) + \psi_n(y) -\chi_n(x) + \chi_n(y) +  \tta_{l,n}(x)(y-x)  \big{)} d\ga_l \\
&\ge \int ( \chi + \phi )d\ga_l^X + \int ( \psi - \chi  )d\ga_l^Y + \liminf_{n \to \infty} \int \big{(}\chi_n(y) - \chi_n(x) + \tta_{l,n}(x) (y-x) \big{)}d\ga_l.
\end{align*}
To handle the last term, disintegrate $\ga_l = (\ga_l)_x \otimes \ga_l^X$, and let $\xi_n : I \to \R$ be a sequence of functions satisfying $\xi_n(x) \in \partial \chi_n(x)$. This allows us to proceed
\begin{align*}
&\int \big{(}\chi_n(y) - \chi_n(x) + \tta_{l,n}(x) (y-x) \big{)}d\ga_l  \\
=& \iint \big{(}\chi_n(y) - \chi_n(x) + \tta_{l,n}(x) (y-x) \big{)} (\ga_l)_x (dy) \ga_l^X(dx)   \\
=&\iint  \big{(} \chi_n(y)- \chi_n(x)  +\xi_n(x)(y-x) \big{)}(\ga_l)_x (dy) \ga_l^X(dx),
\end{align*}
because $\int  \tta_{l,n}(x) (y-x) (\ga_l)_x (dy) = \int \xi_n(x) (y-x) (\ga_l)_x (dy) =0$. Notice that the last integrand is nonnegative. Thus by repeated Fatou's lemma, we deduce
\begin{align*}
& \liminf_{n \to \infty} \int \big{(}\chi_n(y) - \chi_n(x) + \tta_{l,n}(x) (y-x) \big{)}d\ga_l    \\
&\ge \int \liminf_{n \to \infty}\bigg{(} \int \big{(} \chi_n(y)- \chi_n(x)  +\xi_n(x)(y-x) \big{)}(\ga_l)_x (dy) \bigg{)} \ga_l^X(dx)  \\
&\ge \int \bigg{(} \int \big{(} \chi(y) - \chi(x) + \xi(x) (y-x) \big{)}(\ga_l)_x (dy) \bigg{)}\ga_l^X(dx), 
\end{align*}
for some $\xi(x)  \in \partial \chi(x)$ which is a limit point of the bounded sequence $\{ \xi_n(x)\}_n$. Finally, in the last line, the inner integral equals
\[
\int \big{(} \chi(y) - \chi(x) + \tta_l(x) (y-x) \big{)} (\ga_l)_x (dy).
\]
This proves the claim, hence the proposition.
\end{proof}

We are prepared to prove Theorem \ref{main}.

\begin{proof}[Proof of Theorem \ref{main}] 
Fix any optimal pair $(\pi, \mu_1)$ for the problem \eqref{problem}, and let $\ga_l = \pi_x \otimes \mu_l$, $l=1,2$, with $\mu_2 = \mu - \mu_1$ and the kernel $\{\pi_x\}_x$ inherited from $\pi$. We understand $c_1(x,y) = c_1(x)$ in the proof. Let us first assume that $\mu \preceq_c \nu$ is irreducible. Because we assume $P_c = \overline P_c$,  by Proposition \ref{dualattainment}, with $f = - \phi$ and $h_l = -\tta_l$, we have 
\begin{align}
&f(x) + h_l(x) (y-x) + c_l(x,y) \le \psi (y) \ \text{ for each $l=1,2$ and } (x,y) \in \R^2, \label{dualineq}\\
&f(x) + h_l(x) (y-x) + c_l(x,y) = \psi (y) \ \ \ga_l - a.e.\, (x,y) \ \text{for each } l=1,2.
\label{dualeq}
\end{align}
Now, saying that an American option holder randomizes her exercise between $c_1,c_2$ is equivalent to saying that the common mass of $\mu_1, \mu_2$ (written as $\mu_1 \wedge \mu_2$) is nonzero. The common mass of $\mu_1, \mu_2$ is defined by the largest measure $\rho = \mu_1 \wedge \mu_2$ satisfying $\rho \le \mu_1$ and $\rho \le \mu_2$. Since $\ga_1$ and $\ga_2$ have the same kernel, \eqref{dualeq} implies
\begin{align}
&f(x) + h_l(x) (y-x) + c_l(x,y) = \psi (y) \q  \pi_x \otimes \rho - a.e.\, (x,y) \text{ for } l=1,2. \label{dualeq2}
\end{align}

Observe that $\psi$ can be taken as $\psi := \max(\psi_1, \psi_2)$, where 
\begin{align}
\psi_l(y) := \sup_x f(x) + h_l(x) (y-x) + c_l(x,y), \nn
\end{align}
and consequently, $\psi_1, \psi_2, \psi$ are all convex since $c_2$ is convex in $y$ (while $c_1$ is independent of $y$.) Now the idea is to differentiate \eqref{dualeq2} by $y$ for $\nu$-a.e.\,$y$, which is enabled by the fact that $\psi$ is differentiable $\nu$-a.s., since $\nu$ is assumed to be absolutely continuous with respect to Lebesgue. By the differentiation combined with the first-order optimality condition from \eqref{dualineq}, \eqref{dualeq} for each $l=1,2$, we deduce 
\begin{align}
 h_1(x)  = \psi' (y) = h_2(x) + (c_2)_y (x,y)  \q \pi_x \otimes \rho - a.e.\, (x,y), \label{dualeq3}
\end{align}
where $ (c_2)_y$ denotes the partial derivative of $c_2$ by $y$, noting that \eqref{dualineq}, \eqref{dualeq} implies $ (c_2)_y (x,y)$ exists $\ga_2$-a.e., since $\psi$ is differentiable $\nu$-a.e..

Now since $c_1 = c_1(x)$, the left hand side of \eqref{dualineq} is linear in $y$ when $l=1$, while $\psi$ is convex. With this, the first equality in \eqref{dualeq3} implies that for $\rho$-a.e.\,$x$, $\psi$ is linear in the smallest interval containing $\spt(\pi_x)$ which contains $x$. Hence,
\begin{align}
\psi'(y) = \psi'(x) \q \pi_x \otimes \rho - a.e.\, (x,y).
\end{align}
The second equality in \eqref{dualeq3} thus becomes
\begin{align}
(c_2)_y(x,y) = \psi'(x) - h_2(x) \q \pi_x \otimes \rho - a.e.\, (x,y). \label{goodeq}
\end{align}
Because $c_2$ is assumed to be strictly convex in $y$, the solution $y$ to \eqref{goodeq} must be unique, and hence, $y=x$ since $\pi_x$ has its barycenter at $x$. We conclude
\begin{align}\label{pidelta}
\pi_x = \delta_x \q \rho-a.e.\,x,
\end{align}
where $\delta_x \in \cP(\R)$ is the Dirac mass at $x$. \eqref{dualeq2} then yields
\begin{align}\label{c1c2equal}
c_1(x) = c_2(x,x) \q \rho-a.e.\,x.
\end{align}
Now if $c_1(x) \ne c_2(x,x) $ $\mu$-a.s., then \eqref{c1c2equal} implies $\rho \equiv 0$, yielding $\mu_1 \perp \mu - \mu_1$ for any optimal pair $(\pi, \mu_1)$. This proves the disjointness when $\mu \preceq_c \nu$ is irreducible. 



For general $\mu \preceq_c \nu$, it is well known that any convex-ordered pair $(\mu,\nu)$ can be  decomposed as at most countably many irreducible pairs, and the decomposition is uniquely determined by the potential functions $u_\mu, u_\nu$. More precisely, we have:\\
\noindent\cite[Proposition 2.3]{bnt} Let $(I_{k})_{1\leq k \leq N}$ be the open components of the open set $\{u_{\mu}<u_{\nu}\}$ in $\R$, where $N\in \N \cup \{+\infty\}$. Let $I_{0}=\R\setminus \cup_{k\geq1} I_{k}$ and $\mu_{k}=\mu  \big{|}_{_{I_{k}}}$ for $k\geq 0$, so that $\mu =\sum_{k\geq0} \mu_{k}$. There exists a unique decomposition $\nu =\sum_{k\geq0} \nu_{k}$ such that
  \begin{align*}
  \mu_{0} = \nu_{0}, \ \mbox{and} \ (\mu_{k}, \nu_{k}) \mbox{ is irreducible for $k \ge 1$ with } \, \mu_{k}(I_{k})=\mu_{k}(\R).
  \end{align*}
  Moreover, any $\pi \in \cM(\mu,\nu)$ admits a unique decomposition $\pi =\sum_{k\geq0} \pi_{k}$ such that $\pi_{k}\in \cM(\mu_{k},\nu_{k})$ for all $k\geq0$.
  
Here, $\pi_{0}$ must be the identity transport, i.e., $(\pi_{0})_x = \delta_x$,  since it is a martingale transport between the same marginal. Since the theorem has already been proven for the irreducible pairs $(\mu_k, \nu_k)$, $k \ge 1$, we only need to prove it for the identity transport $\pi_0$. In this  case, $\int c_2(x,y) (\pi_0)_x (dy) = c_2(x,x)$, yielding that it is optimal to exercise $c_1$ when $c_1(x) > c_2(x,x)$, while it is optimal to exercise $c_2$ when $c_1(x) < c_2(x,x)$. The assumption $c_1(x) \ne c_2(x,x)$ $\mu$-a.s. therefore proves $\mu_1 \perp \mu - \mu_1$.

Finally, if $(\pi, \mu_1)$ and $(\pi, \mu_1')$ are both optimal, let $\ga_l = \pi_x \otimes \mu_l$ and $\ga_l' = \pi_x \otimes \mu_l'$, $l=1,2$. Let $\tilde \ga_l = (\ga_l + \ga_l')/2$. Then $(\tilde \ga_1, \tilde \ga_2)$ is an optimal solution to \eqref{problem} since $\ga_l$ and $\ga_l'$ share the same kernel. Now $\mu_1 \ne \mu_1'$ implies $\tilde \ga_1^X \not\perp \tilde \ga_2^X$, a contradiction. 
\end{proof}

\end{document}